\documentclass[a4paper,12pt]{article}
\usepackage[english]{babel} 
\usepackage{amsfonts,amssymb,amsmath,a4wide,amsbsy} 
\usepackage{amsthm} 

\usepackage{graphicx}

\frenchspacing

\newcommand{\R}{\ensuremath{\mathbb{R}}}

\renewcommand{\P}{{\mathbb{P}}}


\newcommand{\thefont}[2]{\fontsize{#1}{#2}\fontshape{n}\selectfont}
\def\ind{\rlap{\thefont{10pt}{12pt}1}\kern.16em\rlap{\thefont{11pt}{13.2pt}1}\kern.4em}

\providecommand{\be}{\begin{equation}}
\providecommand{\ee}{\end{equation}}


\theoremstyle{plain}
\newtheorem{thm}{Theorem}
\newtheorem{lem}[thm]{Lemma}
\newtheorem{cor}[thm]{Corollary}

\theoremstyle{definition}

\newtheorem{rem}[thm]{Remark}

\title{Sharp upper bounds for the deviations from the mean of the sum of independent Rademacher random 
variables}
\author{Harrie Hendriks and Martien C.A. van Zuijlen}
\date{}

\bibliographystyle{plain}

\begin{document}

\maketitle
\centerline{IMAPP, MATHEMATICS} 
\medskip
\centerline{RADBOUD UNIVERSITY  NIJMEGEN}
\begin{center}
	{
	Heyendaalseweg 135\\ 
	6525 AJ Nijmegen\\ 
	The Netherlands\\
	e-mail authors: H.Hendriks@math.ru.nl, M.vanZuijlen@math.ru.nl
}\end{center}
\noindent
\textbf{AMS 2000 subject classifications:} Primary 60E15, 60G50; secondary 62E15, 62N02.
\\	
\textbf{Keywords and phrases:} Sums of independent Rademacher random variables, tail probabilities, upper bounds, concentration inequalities, random walk, finite samples.

\begin{abstract}

  For a fixed unit vector $a=(a_1,a_2,...,a_n)\in S^{n-1}$, i.e. $\sum_{i=1}^na_i^2=1$, we consider the $2^n$ sign 
 vectors $\epsilon=(\epsilon_1,\epsilon_2,...,\epsilon_n)\in \{-1,1\}^n$ and the 
 corresponding scalar products $a.\epsilon=\sum_{i=1}^n a_i\epsilon_i.$ 
 In \cite{HK} the following old conjecture has been reformulated. It states  
 that among the $2^n$ sums of the form $\sum\pm a_i$  there are not more
   with $|\sum_{i=1}^n \pm a_i|>1$ than there  are with 
  $|\sum_{i=1}^n \pm a_i|\leq 1.$ The result is of interest in itself, but has also 
   an appealing reformulation in probability theory and in geometry.
 In this paper we will solve an extension of this problem in the uniform case where all the $a$'$s$ are equal. More precisely, for $S_n$ being a sum of $n$ independent Rademacher random variables,  we will give, for several values of $\xi,$ precise lower bounds
for the probabilities $$P_n:=\P\{-\xi\sqrt{n}\leq S_n \leq \xi\sqrt{n}\}$$ 
or equivalently for $$Q_n:=\P\{-\xi\leq T_n \leq \xi\},$$
where $T_n$ is a standardized Binomial random variable with parameters $n$ and $p=1/2.$
 These lower bounds are sharp and much better than for instance the bound that can be obtained  from application of the Chebishev inequality. In case $\xi=1$  Van Zuijlen solved this problem in \cite{MVZ}.  We remark that our bound will have nice applications in probability theory and especially in random walk theory. 
\end{abstract}

\section{Introduction and result}
	Let  $\epsilon_1,\epsilon_2,...,$ be a sequence of i.i.d. Rademacher 
random variables and for positive integers $n$ let $a_n=(a_{1n},a_{2n},...,a_{nn})$ be a unit-vectors in $\R^n,$ so that
$\sum_{i=1}^na_{in}^2=1$. The following problem has been presented in \cite{G} 
and is attributed to B. Tomaszewski. In \cite{HK}, Conjecture 1.1, this old 
problem has been reformulated as follows:
  $$\P(|a_{1n}\epsilon_1+a_{1n}\epsilon_2+...+a_{1n}\epsilon_n|\leq 1)\geq \frac{1}{2}, \;\;\hbox{for}\;\;n=1,2,...$$ 
  This conjecture is at least  25 years old and seems still
     to be unsolved. In the uniform case where,
    $$a_{1n}=a_{2n}=...=a_{nn}=n^{-1/2},$$ 
  the maximum possible value of $\frac{S_n}{\sqrt n}$ is 
$\sqrt n,$ where 
\begin{equation}\label{vgl0}
 S_n:=\epsilon_1+\epsilon_2+...+\epsilon_n
\end{equation}
 and the conjecture, stating that for integers $n\geq 2,$
$$ \P\{|S_n|\leq  \sqrt{n}\}=\P\{|\sum_{i=1}^n\epsilon_i|\leq  \sqrt n\}\geq 1/2,$$ 
  has been solved recently by M.C.A. van Zuijlen. See \cite{MVZ}.
 It means that  at least $50\%$ of the probability mass is between minus one and one standard deviation from the mean, which is quite remarkable. We note that 
 \begin{itemize}
 \item[i)] $S_n$ can be easily expressed in terms of sums of independent Bernoulli(1/2) random variables since $(\epsilon_i+1)/2$ are independent Bernoulli random variables and hence $S_n$ is distributed as $2B_n-n$, where $B_n$ is a binomial random variable with parameters $n$ and $1/2.$ It follows that $S_n/\sqrt{n}$ is distributed as $T_n$, where $T_n$ is a binomial random variable with parameters $n$ and $p=1/2.$
 \item[ii)] easy calculations show that the  sequence  $(P_n)$ is 
not monotone in $n$.
\end{itemize}
  In this paper we shall generalize Van Zuijlen's result and derive sharp lower bound 
  for probabilities concerning $\xi$ standard deviations:
  \begin{equation}\label{vgl1}
P_n:=\P\{|S_n|\leq \xi\sqrt n\}= 
\P\{|\sum_{i=1}^n\epsilon_i|\leq \xi\sqrt n\},
\end{equation} where $\xi\in (0,1].$
%
Note that trivially $$P_1=\begin{cases} 1,&\;\;\text{for}\;\; \xi =1;\\
0,&\;\;\text{for}\;\; \xi<1. \\
\end{cases} $$
Throughout the paper $n$ and $k$ will denote nonnegative integers. 
Our result is as follows.

\begin{thm}\label{the thm}
 Let $ \epsilon_1,\epsilon_2,...,\epsilon_n$ be independent Rademacher random variables, so that $$\P\{\epsilon_1=1\}=\P\{\epsilon_1=-1\}=1/2$$ and let
  $S_n$  and 
 $P_n$ be defined as in (\ref{vgl0}) and (\ref{vgl1}), where 
 $\xi\in (0,1].$
Define
$$n_k=2\left\lceil\frac{\frac{k^2}{\xi^2}+k}2\right\rceil-k-1,\quad C_k=\{n: n_k\leq n < n_{k+1}\},
\hbox{ and }Q_k^-:=P_{n_{k+1}-1}.$$
Then,  with $\Phi$ indicating the standard normal distribution function, we have for $k\ge0$
  \begin{itemize} 
  \item [a.] $P_n=\P\{|S_n|\le \xi\sqrt n\}=\P\{|S_n|\le k\}$, for $n\in C_k$,
  \item [b.]  $Q_k^-=\min_{n\in C_k}P_n,$
  \item [c.]  the sequence $(Q_k^-)$ is strictly monotone increasing in $k$,
  \end{itemize}
  Moreover,
  \begin{itemize} 
  \item [d.]  $\lim_{k\rightarrow \infty} Q_k^-  =1-2 \Phi(\xi)$,
  \item [e.]  $  Q_1^-=P_{n_2-1}\leq P_n$, for all $n\ge n_1$.
  \end{itemize}
  \end{thm}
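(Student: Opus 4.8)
\emph{Proof strategy.} The backbone is the elementary observation that $S_n$ is integer-valued of the same parity as $n$, so that $\{|S_n|\le\xi\sqrt n\}=\{|S_n|\le r_n\}$, where $r_n$ is the largest integer with $r_n\equiv n\pmod 2$ and $r_n\le\xi\sqrt n$. For part (a) I would show that $C_k=[n_k,n_{k+1})$ is designed precisely so that, for every $n\in C_k$, the largest attainable value of $|S_n|$ below the real threshold $\xi\sqrt n$ agrees with the largest attainable value below the integer $k$: concretely, if $n\equiv k$ one checks $k\le\xi\sqrt n<k+2$, and if $n\equiv k+1$ one checks $k-1\le\xi\sqrt n<k+1$. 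After squaring, both chains of inequalities reduce to membership in $[n_k,n_{k+1})$, and this is exactly where the ceiling in the definition of $n_k$ (which forces $n_k\equiv k+1\pmod2$) is used; since $n_k<n_{k+1}$ the intervals $C_k$ partition the relevant range, giving (a).

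For part (b), fix $k$ and set $f(n)=\P\{|S_n|\le k\}$. Conditioning on the last two increments, $S_{n+2}=S_n+\epsilon_{n+1}+\epsilon_{n+2}$, together with the symmetry and unimodality of the law of $S_n$, yields that $f$ decreases in steps of two within each parity class, the decrement for $n\equiv k$ being $\tfrac12(\P\{S_n=k\}-\P\{S_n=k+2\})\ge0$ (and the analogue with $k\pm1$ for $n\equiv k+1$). Hence the minimum over $C_k$ sits at the largest even or the largest odd index, i.e. at $n_{k+1}-2$ or $n_{k+1}-1$. The one-step identity $\P\{|S_{n+1}|\le k-1\}=\P\{|S_n|\le k\}-\P\{S_n=k\}$, valid for $n\equiv k$, applied with $n=n_{k+1}-2$, gives $P_{n_{k+1}-1}\le P_{n_{k+1}-2}$, so the minimum is $Q_k^-=P_{n_{k+1}-1}$, proving (b). Part (e) is then immediate: every $n\ge n_1$ lies in some $C_k$ with $k\ge1$, whence $P_n\ge Q_k^-\ge Q_1^-$ by (b) and (c), and the bound is attained at $n=n_2-1$.

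Part (d) is a central-limit statement. Since $n_{k+1}-1\sim(k+1)^2/\xi^2$, one has $k/\sqrt{n_{k+1}-1}\to\xi$, so writing $Q_k^-=\P\{\,|S_{n_{k+1}-1}|/\sqrt{n_{k+1}-1}\le k/\sqrt{n_{k+1}-1}\,\}$ and invoking the de Moivre--Laplace theorem (with a Berry--Esseen bound to absorb the drift of the upper limit towards $\xi$) gives convergence of $Q_k^-$ to the normal probability $\P\{-\xi\le Z\le\xi\}$, which is the limit asserted in (d).

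The real work is part (c). I would first exploit the parity of $n_{k+1}-1$: since $n_{k+1}-1\equiv k+1\pmod2$, the value $k$ is not attainable by $|S_{n_{k+1}-1}|$, so $Q_k^-=\P\{|S_{n_{k+1}-1}|\le k-1\}=\P\{|S_{n_{k+1}-1}|\le k\}$. This converts the increment into a comparison at a common band,
\[
Q_{k+1}^- - Q_k^- \;=\; \P\{|S_{n_{k+2}-1}|\le k\}-\P\{|S_{n_{k+1}-1}|\le k\},
\]
between two sample sizes $n_{k+1}-1<n_{k+2}-1$ of opposite parity. Telescoping by single steps via $S_{n+1}=S_n+\epsilon_{n+1}$, every parity-increasing step ($n\equiv k+1\to n+1\equiv k$) contributes $+\P\{S_n=k+1\}$ and every parity-decreasing step contributes $-\P\{S_n=k\}$, so the difference is a signed sum of binomial point probabilities containing exactly one surplus positive term. \emph{The main obstacle is that this sum nearly cancels:} both $Q_{k+1}^- - Q_k^-$ and each of the two partial sums are of order $1/k$ with the same leading constant $\xi\sqrt{2/\pi}\,e^{-\xi^2/2}$, so crude normal approximation is useless and one must control sub-leading terms. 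I expect to close this by grouping the telescoped terms into consecutive up/down pairs, rewriting each pair as $\tfrac12(\P\{S_n=k+1\}-\P\{S_n=k-1\})\le0$ with the help of the exact ratio $\P\{S_n=k+1\}/\P\{S_n=k-1\}=(n-k+1)/(n+k+1)$, and then showing that the single unpaired positive term $\P\{S_{n_{k+2}-2}=k+1\}$ strictly dominates the small negative paired remainder. Log-concavity of $n\mapsto\P\{S_n=j\}$ and the explicit location $n_{k+1}\approx(k+1)^2/\xi^2$ should make this inequality strict for all large $k$, with the finitely many small $k$ checked directly.
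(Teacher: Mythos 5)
Your parts (a), (b), (d) and (e) are essentially the paper's own arguments and are fine: (a) is the parity/threshold computation behind the definition of $n_k$; (b) uses the two-step monotonicity $P_n(k)-P_{n+2}(k)=\tfrac12(\P\{S_n=k\}-\P\{S_n=k+2\})\ge0$ within each parity class plus the one-step identity to place the minimum at $n_{k+1}-1$; (e) is immediate from (b) and (c); and (d) is the central limit theorem (your limit $\P\{-\xi\le Z\le\xi\}=2\Phi(\xi)-1$ is correct; the paper's ``$1-2\Phi(\xi)$'' is a sign typo). The problem is part (c), which is where all the difficulty of the theorem lives, and there you have a correct set-up but no proof: the concluding step is announced (``I expect to close this by\ldots'', ``should make this inequality strict for all large $k$, with the finitely many small $k$ checked directly'') rather than carried out. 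Note also that ``finitely many small $k$'' is not a finite verification: the claim is for every $\xi\in(0,1]$, and the quantities $n_k$, hence the inequalities to be checked, vary with $\xi$ over a continuum.

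Moreover, your particular grouping makes the missing step strictly harder than it needs to be. Your telescoping and your pair identity are correct, and in fact each pair equals $\tfrac12(\P\{S_n=k+1\}-\P\{S_n=k-1\})=-\frac{k}{n+1}\P\{S_{n+1}=k\}$, which is exactly the paper's two-step decrement $P_{m-1}(k-1)-P_{m+1}(k-1)=\frac km\P\{S_m=k\}$ (Remark 2 of the paper). But you leave the \emph{last} up-step $\P\{S_{n_{k+2}-2}=k+1\}$ unpaired, and every one of your negative pairs then carries a point probability $\P\{S_m=k\}$ with $m<n_{k+2}-2$ at the lower band $k$; each of these strictly exceeds your positive term, since $\P\{S_m=k\}\ge\P\{S_{n_{k+2}-3}=k\}=\frac{n_{k+2}+k-1}{n_{k+2}-2}\P\{S_{n_{k+2}-2}=k+1\}$. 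So term-by-term domination is impossible, and the inequality survives only because the weight sum $\sum_m k/m$ falls short of $1$ by a margin beating the excess in these ratios --- a second-order effect (the difference $Q_{k+1}^--Q_k^-$ is $O(1/k^2)$ while both of your partial sums are $O(1/k)$), which log-concavity alone will not deliver without the explicit computation you defer. The paper's resolution is the opposite grouping: leave the \emph{first} up-step $\P\{S_{n_{k+1}-1}=k+1\}$ unpaired. Then all negative terms are $\frac{k+1}{m}\P\{S_m=k+1\}$ at the \emph{same} band with $m>n_{k+1}-1$, the monotonicity $\P\{S_{m+2}=k+1\}\le\P\{S_m=k+1\}$ (valid exactly when $(k+1)^2\le m+2$, which is where $\xi\le1$ enters) bounds every point probability by the unpaired one, and the whole of part (c) reduces to the purely arithmetic inequality
\begin{equation*}
\sum_{i=1}^{\ell}\frac{k+1}{n+2i}\le1,
\end{equation*}
proved by an elementary (if delicate) case analysis in the paper's Appendix. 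Re-grouping your telescoping in this way, and then supplying that arithmetic lemma, is what it would take to turn your outline into a proof.
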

\noindent
A consequence of Theorem \ref{the thm} is the following result.
\begin{cor}\label{Cor2}
For $n\geq 2$ we have
$$P_n\geq 
\begin{cases} 1/2,&\;\;\text{for}\;\; \xi =1;\\
3/8,&\;\;\text{for}\;\; \xi \in [\sqrt{2/3},1); \\
5/16,&\;\;\text{for}\;\; \xi \in [\sqrt{1/2},\sqrt{2/3}) .\\
\end{cases} $$ 
More generally, if $0<\xi\le1$ and $n_2\ge3$, $n_2$ odd,
then we have
$$\frac{2}{\sqrt{n_2+1}}\leq \xi<\frac{2}{\sqrt{n_2-1}},\qquad
n_1=2\left\lceil\frac{n_2-3}{8}\right\rceil
$$ 
and for all $n\geq n_1$
 $$P_n\geq P_{n_2-1}=\binom{n_2-1}{(n_2-1)/2}2^{-(n_2-1)}.$$
 \end{cor}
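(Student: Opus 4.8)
The plan is to read off both parts of Corollary \ref{Cor2} from Theorem \ref{the thm}, with the three displayed numbers being the instances $n_2\in\{3,5,7\}$ of the general claim. Throughout I would set $x=1/\xi^2$ (so $x\ge1$) and first simplify the definition of $n_k$: since $k^2/\xi^2+k=k^2x+k$ and an integer pulls out of a ceiling, one finds $n_2=2\lceil 2x+1\rceil-3=2\lceil 2x\rceil-1$ and $n_1=2\lceil\frac{x+1}2\rceil-2=2\lceil\frac{x-1}2\rceil$. The first identity shows $n_2$ is automatically odd and, because $x\ge1$ forces $\lceil 2x\rceil\ge2$, automatically $\ge3$; thus the general hypothesis holds for every $\xi\in(0,1]$. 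Prescribing the odd integer $n_2$ is equivalent to prescribing $\lceil 2x\rceil=(n_2+1)/2$, i.e. $(n_2-1)/2<2x\le(n_2+1)/2$; dividing by $2$ and passing from $x=1/\xi^2$ to $\xi$ yields exactly the stated range $\frac2{\sqrt{n_2+1}}\le\xi<\frac2{\sqrt{n_2-1}}$.

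Next I would verify $n_1=2\lceil\frac{n_2-3}8\rceil$ by feeding the same constraint into $n_1=2\lceil\frac{x-1}2\rceil$. Writing $M=\lceil 2x\rceil=(n_2+1)/2$, the bound $(M-1)/2<x\le M/2$ confines $\frac{x-1}2$ to the half-open interval $\big(\frac{M-3}4,\frac{M-2}4\big]$, whose right endpoint is $\frac{M-2}4=\frac{n_2-3}8$. I expect this ceiling bookkeeping to be the only delicate point. The clean way to close it is to note that, $M$ being an integer, the fractional part of $\frac{M-2}4$ lies in $\{0,\frac14,\frac12,\frac34\}$, so the gap $\lceil\frac{M-2}4\rceil-\frac{M-2}4$ never exceeds $\frac34$; since the interval has length $\frac14$, it then lies entirely in $\big(\lceil\frac{M-2}4\rceil-1,\lceil\frac{M-2}4\rceil\big]$, on which the ceiling is constant. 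Hence $\lceil\frac{x-1}2\rceil=\lceil\frac{M-2}4\rceil$ and $n_1=2\lceil\frac{n_2-3}8\rceil$; alternatively one may simply check the four residues of $n_2$ modulo $8$.

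With the arithmetic in place, the inequality is immediate. By definition $Q_1^-=P_{n_2-1}$, so Theorem \ref{the thm}(e) states precisely that $P_n\ge P_{n_2-1}$ for every $n\ge n_1$. To evaluate the bound I would use that $n_2-1\in C_1$, whence Theorem \ref{the thm}(a) gives $P_{n_2-1}=\P\{|S_{n_2-1}|\le1\}$; since $n_2-1$ is even, $S_{n_2-1}$ takes only even values and the event $\{|S_{n_2-1}|\le1\}$ collapses to $\{S_{n_2-1}=0\}$, so $P_{n_2-1}=\binom{n_2-1}{(n_2-1)/2}2^{-(n_2-1)}$.

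Finally I would specialize. For $\xi=1$ one gets $x=1$, $\lceil 2x\rceil=2$, $n_2=3$, $n_1=0$ and $P_2=\binom21 2^{-2}=\tfrac12$; for $\xi\in[\sqrt{2/3},1)$ one gets $\lceil 2x\rceil=3$, $n_2=5$, $n_1=2$ and $P_4=\binom42 2^{-4}=\tfrac38$; for $\xi\in[\sqrt{1/2},\sqrt{2/3})$ one gets $\lceil 2x\rceil=4$, $n_2=7$, $n_1=2$ and $P_6=\binom63 2^{-6}=\tfrac5{16}$. In every case $n_1\le2$, so the bound $P_n\ge P_{n_2-1}$ holds for all $n\ge2$, which is the displayed table. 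The only care needed here is matching each $\xi$-interval to the integer value of $\lceil 2x\rceil$, which is the range computation already carried out in the first paragraph.
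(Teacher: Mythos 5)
Your proposal is correct and follows essentially the same route as the paper: simplify $n_2=2\lceil 2/\xi^2\rceil-1$, translate this into the stated $\xi$-range, use the oddness of $n_2$ (equivalently, the integrality of $M$) to show the ceiling defining $n_1$ is constant on the relevant length-$\tfrac14$ interval so that $n_1=2\lceil\frac{n_2-3}8\rceil$, and then invoke Theorem \ref{the thm}(e) together with the identification $P_{n_2-1}=\P\{S_{n_2-1}=0\}$. The only difference is organizational (you derive the general case first and read off the three-entry table as specializations, while the paper treats the table directly), which does not change the substance of the argument.
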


\noindent
It is worthwhile to clarify in a plot the structure of the probabilities
$P_n(\ell)=\P\{|S_n|=\ell\}$, where $n$ and $\ell$ are nonnegative integers such that
$n+\ell$ is even. See Figure \ref{Fig1}.

\begin{figure}
\label{Fig1}
\includegraphics{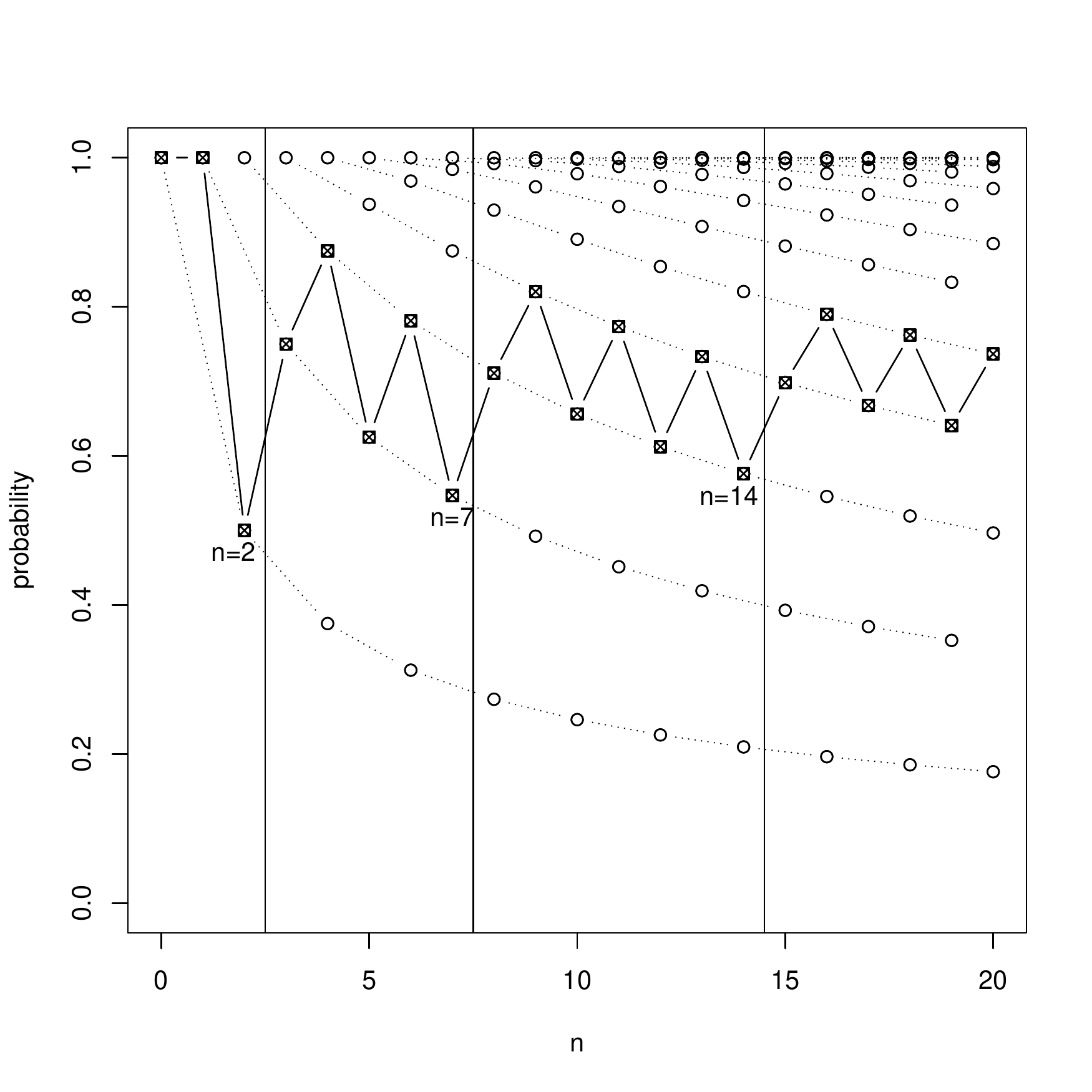}
\caption{Graph of probabilities $P_n(\ell)$, $n+\ell$ even. 
(Dotted lines connect points with
constant $\ell=0,1,2,\ldots$, upwards in graph. The square symbols indicate the points
$(n,P_n)$ for $\xi=1$. The vertical lines separate the regions $C_k$, $k=1,2,\ldots$)}
\end{figure}

\section{Preliminaries}
Be given independent Rademacher random variables $\varepsilon_i$, $i=1,2,3,\ldots$,
as defined in Theorem \ref{the thm}, and let $S_n=\sum_{i=1}^n\varepsilon_i$ such that $S_0=0$.
Define
$$P_n(k)=\P(|S_n|\le k).$$
Since $n+S_n$ is even it follows that $P_n(k)=P_n(k-1)$ if $n+k$ is odd. 
\\
A basic property is the symmetry of the distribution of $S_n$:
$$\P\{S_n=k\}=\P\{S_n=-k\}.$$
Moreover, $\varepsilon_n$ being independent of $S_{n-1}$,
\begin{eqnarray*}
\lefteqn{\P\{(S_{n-1}=k+1\,\&\,\varepsilon_n=-1)\hbox{ or }(S_{n-1}=-k-1\,\&\,\varepsilon_n=+1)\}}
\qquad\qquad\qquad\qquad\qquad
\\
&=&
2\P\{S_{n-1}=k+1\,\&\,\varepsilon_n=-1\}
=
\P\{S_{n-1}=k+1\}
\end{eqnarray*}
and, replacing $\varepsilon_n$ by the equally distributed $-\varepsilon_n$,
$$
\P\{(S_{n-1}=k+1\,\&\,\varepsilon_n=+1)\hbox{ or }(S_{n-1}=-k-1\,\&\,\varepsilon_n=-1)\}
=
\P\{S_{n-1}=k+1\}.
$$
This leads to the following properties for $P_n(k)$.
\begin{rem}\label{remark}\emph{
Suppose $n+k$ is even, $n\ge1$, 
then
\begin{eqnarray*}
P_{n}(k)&=&P_{n-1}(k-1)+\P\{(S_{n-1}=k+1\,\&\,\varepsilon_{n}=-1)\hbox{ \rm or }(S_{n-1}=-k-1\,\&\,\varepsilon_n=+1)\}
\\&=&P_{n-1}(k-1)+\P\{S_{n-1}=k+1),
\\
P_n(k)&=&P_{n-1}(k+1)-\P\{(S_{n-1}=k+1\,\&\,\varepsilon_{n}=+1)\hbox{ \rm or }(S_{n-1}=-k-1\,\&\,\varepsilon_n=-1)\}
\\&=&P_{n-1}(k+1)-\P\{S_{n-1}=k+1).
\end{eqnarray*}
Suppose $n+k$ is even, $n\ge1$, then
\begin{eqnarray*}
\P\{S_{n-1}=k-1\}&=&\binom{n-1}{\frac{n+k}2-1}2^{-(n-1)}=\frac{n+k}n\binom{n}{\frac{n+k}2}2^{-n}=\frac{n+k}n\P\{S_n=k\},
\\
\P\{S_{n-1}=k+1\}&=&\P\{S_{n-1}=-k-1\}=\frac{n-k}n\P\{S_n=-k\}=\frac{n-k}n\P\{S_n=k\}.
\end{eqnarray*}
Suppose $n+k$ is even, $n\ge1$, $k-1\ge0$. 
It follows that
\begin{eqnarray*}
P_{n-1}(k-1)-P_{n+1}(k-1)&=&P_n(k-2)+\P\{S_{n-1}=k-1\}-P_n(k-2)-\P\{S_{n}=k\}
\\
&=&
\P\{S_{n-1}=k-1\}-\P\{S_{n}=k\}=\frac kn\P\{S_{n}=k\}.
\end{eqnarray*}
Furthermore, for $n\ge k\ge0$,
\begin{eqnarray*}
\frac{\P\{S_{n}=k\}}{\P\{S_{n+2}=k\}}
&=&
\frac{\P\{S_{n}=k\}}{\P\{S_{n+1}=k+1\}}\times\frac{\P\{S_{n+1}=k+1\}}{\P\{S_{n+2}=k\}}=
\frac{n+1+k+1}{n+1}\times\frac{n+2-k}{n+2}
\\&=&
\frac{(n+2)^2-k^2}{(n+2)^2-(n+2)}.
\end{eqnarray*}
In particular, if $k^2\le n+2$ then $\P\{S_{n}=k\}\ge\P\{S_{n+2}=k\}$, with equality only if $k^2=n+2$.
}\end{rem}
\begin{cor}
Suppose $n+k$ is even, $n>k$ (i.e. $n\ge k+2$), then $P_n(k)=P_n(k+1)<P_m(k+1)$ for all $m<n$.
\end{cor}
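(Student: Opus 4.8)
The first equality $P_n(k)=P_n(k+1)$ is immediate: since $n+k$ is even, $n+(k+1)$ is odd, so the parity observation preceding Remark~\ref{remark} gives $P_n(k+1)=P_n((k+1)-1)=P_n(k)$. Thus the real content is the strict inequality $P_n(k+1)<P_m(k+1)$ for every $m<n$; note that the hypothesis forces $n\equiv k\pmod 2$ and $n\ge k+2$.

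My plan for the inequality is to exploit the difference identity $P_{\nu-1}(\kappa-1)-P_{\nu+1}(\kappa-1)=\frac{\kappa}{\nu}\P\{S_\nu=\kappa\}$ from Remark~\ref{remark}, which compares threshold-$(\kappa-1)$ probabilities two indices apart and is manifestly nonnegative. The subtlety is that $\ell\mapsto P_\ell(k+1)$ is \emph{not} monotone in $\ell$: by the parity rule it equals $P_\ell(k)$ when $\ell\equiv n\pmod 2$ and $P_\ell(k+2)$ otherwise, so it zigzags. I would therefore split the indices into the two residue classes mod $2$ and treat each as a subsequence. Taking $\kappa=k+2$ in the identity handles the class $\ell\not\equiv n$, giving $P_\ell(k+1)-P_{\ell+2}(k+1)=\frac{k+2}{\ell+1}\P\{S_{\ell+1}=k+2\}\ge 0$; taking $\kappa=k+1$ (and rewriting $P(\cdot,k)=P(\cdot,k+1)$ by the parity rule on the relevant indices) handles the class $\ell\equiv n$, giving $P_\ell(k+1)-P_{\ell+2}(k+1)=\frac{k+1}{\ell+1}\P\{S_{\ell+1}=k+1\}\ge 0$. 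Hence within each residue class the sequence is nonincreasing in $\ell$.

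With these two monotone subsequences I would finish as follows. For $m<n$ with $m\equiv n\pmod 2$, telescoping the class-$n$ inequality from $m$ up to $n$ yields $P_m(k+1)\ge P_n(k+1)$, and the final step ($\ell=n-2$) is strict because $\P\{S_{n-1}=k+1\}>0$, which holds exactly since $n-1\ge k+1$, i.e.\ $n>k$. For $m<n$ with $m\not\equiv n\pmod 2$, one further application of the basic recursion of Remark~\ref{remark} (with the parity rule) gives $P_m(k+1)=P_{m+1}(k+1)+\P\{S_m=k+1\}\ge P_{m+1}(k+1)$, where $m+1\equiv n$ and $m+1\le n$; if $m+1=n$ the added term $\P\{S_{n-1}=k+1\}$ is positive, and if $m+1<n$ the previous case applies to $P_{m+1}(k+1)$. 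In both situations $P_m(k+1)>P_n(k+1)$.

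The main obstacle, and the only delicate point, is precisely this non-monotonicity: a naive ``$P_n(k+1)$ decreases in $n$'' claim is false, so one must isolate the two parities and observe that $P_n(k+1)$ sits at a local minimum of the zigzag that lies strictly below all earlier terms of \emph{both} parities. Everything else is bookkeeping with the identities of Remark~\ref{remark}, and every instance of strictness traces back to the single positivity fact $\P\{S_{n-1}=k+1\}>0$, equivalently to the hypothesis $n>k$.
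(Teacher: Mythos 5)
Your proof is correct and follows essentially the same route as the paper: the paper's own proof also splits the indices $m<n$ by parity class and uses the identities of Remark \ref{remark} to produce the two strictly decreasing chains $P_n(k)<P_{n-2}(k)<\cdots<P_{k+2}(k)<P_k(k)=1$ and $P_n(k)<P_{n-1}(k+1)<P_{n-3}(k+1)<\cdots<P_{k+1}(k+1)=1$. Your telescoping within each residue class, with strictness traced to $\P\{S_{n-1}=k+1\}>0$, is the same argument written in the opposite direction.
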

\begin{proof}
According to the above Remark \ref{remark}
$$P_n(k)<P_{n-2}(k)<\cdots<P_{k+2}(k)<P_k(k)=1,$$
$$P_n(k)<P_{n-1}(k+1)<P_{n-3}(k+1)<\cdots<P_{k+1}(k+1)=1.$$
\end{proof}
\begin{thm}\label{mainthm}
Suppose $k\ge1$, $n\ge k$ and $n+k$ is even.
If $n+2\ge k^2$ and $\ell\ge0$, such that $n+1+2\ell<\frac{(k+1)^2}{k^2}(n+2)$, then
$P_n(k-2)<P_{n+1+2\ell}(k-1)$.
\end{thm}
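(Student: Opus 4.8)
The plan is to reduce the claimed strict inequality to a single purely analytic estimate and then to prove that estimate. Since $n+k$ is even, $n+(k-1)$ is odd, so $P_n(k-2)=P_n(k-1)$ and it is enough to show $P_n(k-1)<P_m(k-1)$ with $m:=n+1+2\ell$. First I would dispatch the case $\ell=0$ using the identity $P_{n+1}(k-1)=P_n(k-2)+\P\{S_n=k\}$ of Remark~\ref{remark} (legitimate since $(n+1)+(k-1)=n+k$ is even), which already gives $P_{n+1}(k-1)-P_n(k-2)=\P\{S_n=k\}>0$. For the general step I would telescope with the identity $P_{m'}(k-1)-P_{m'+2}(k-1)=\tfrac{k}{m'+1}\P\{S_{m'+1}=k\}$ of Remark~\ref{remark}, summed over $m'=n+1,n+3,\dots,n+2\ell-1$, obtaining $P_{n+1}(k-1)-P_m(k-1)=\sum_{j=0}^{\ell-1}\tfrac{k}{N_j}\P\{S_{N_j}=k\}$ with $N_j:=n+2+2j$. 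Combining the two displays gives the exact identity $P_m(k-1)-P_n(k-2)=\P\{S_n=k\}-\sum_{j=0}^{\ell-1}\tfrac{k}{N_j}\P\{S_{N_j}=k\}$, so the theorem is equivalent to
\begin{equation*}
\Sigma(\ell):=\sum_{j=0}^{\ell-1}\frac{k}{N_j}\,\P\{S_{N_j}=k\}<\P\{S_n=k\}.
\end{equation*}

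Next I would remove the probabilities from $\Sigma(\ell)$. By the ratio $\P\{S_N=k\}=\tfrac{N(N-1)}{N^2-k^2}\P\{S_{N-2}=k\}$ recorded in Remark~\ref{remark}, each summand equals $\tfrac{k(N_j-1)}{N_j^2-k^2}\P\{S_{n+2j}=k\}$. Since $N_j\ge n+2\ge k^2$, the same Remark shows $\P\{S_{n+2j}=k\}$ is nonincreasing in $j$, whence $\P\{S_{n+2j}=k\}\le\P\{S_n=k\}$ and
\begin{equation*}
\Sigma(\ell)\le\P\{S_n=k\}\sum_{j=0}^{\ell-1}\frac{k(N_j-1)}{N_j^2-k^2}.
\end{equation*}
It therefore suffices to prove the clean inequality $\sum_{j=0}^{\ell-1}\tfrac{k(N_j-1)}{N_j^2-k^2}<1$, where $N_j=n+2+2j$. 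I would then split by partial fractions, $\tfrac{k(N-1)}{N^2-k^2}=\tfrac{k-1}{2}\cdot\tfrac{1}{N-k}+\tfrac{k+1}{2}\cdot\tfrac{1}{N+k}$, turning the sum into two harmonic-type sums over arithmetic progressions of step $2$; bounding each by the convexity (Hermite--Hadamard) estimate $\sum_{j=0}^{\ell-1}\tfrac{1}{a+2j}\le\tfrac12\ln\tfrac{a+2\ell-1}{a-1}$ and inserting the hypothesis $n+1+2\ell<\tfrac{(k+1)^2}{k^2}(n+2)$ collapses the two logarithms to the leading term $\tfrac{(k-1)+(k+1)}{2}\ln\!\left(1+\tfrac1k\right)=k\ln\!\left(1+\tfrac1k\right)$, which is $<1$ by $\ln(1+x)<x$.

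The main obstacle is located precisely at this last estimate, because it is genuinely tight: $k\ln(1+1/k)\uparrow 1$ as $k\to\infty$, and one checks numerically that the sum $\sum_{j=0}^{\ell-1}\tfrac{k(N_j-1)}{N_j^2-k^2}$ itself climbs to $1$ in the extremal regime $n+2=k^2$ (where moreover $\P\{S_{n}=k\}=\P\{S_{n+2}=k\}$, so the bound $\P\{S_{n+2j}=k\}\le\P\{S_n=k\}$ used above is sharp and provides no slack). Consequently, replacing $n+1+2\ell$ by the real threshold $\tfrac{(k+1)^2}{k^2}(n+2)$ in the logarithmic estimate overshoots $1$; the argument must instead retain the lower-order terms and use that the summation genuinely stops at $N_{\ell-1}=n+2\ell$, which lies strictly below the threshold because $\ell$ is an integer and $n+1+2\ell$ advances only in steps of $2$. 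Carrying out this discretization-sensitive bound of the two discrete sums to an accuracy beyond the leading order, uniformly in $n$ and $k$, is the real work of the proof; the reductions preceding it are all exact consequences of the identities in Remark~\ref{remark}.
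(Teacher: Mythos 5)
Your reduction is correct and is in fact the same as the paper's: the case $\ell=0$ via $P_{n+1}(k-1)-P_n(k-2)=\P\{S_n=k\}$, then telescoping with $P_{m'}(k-1)-P_{m'+2}(k-1)=\frac{k}{m'+1}\P\{S_{m'+1}=k\}$, and finally discarding the probabilities by monotonicity of $\P\{S_N=k\}$ in $N$ (valid since $N\ge n+2\ge k^2$). At that point the paper is left with the purely arithmetic claim $\sum_{i=1}^{\ell}\frac{k}{n+2i}\le 1$ (its Lemma in the Appendix), and you are left with the marginally sharper $\sum_{j=0}^{\ell-1}\frac{k(N_j-1)}{N_j^2-k^2}<1$. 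Everything up to here is fine.

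The genuine gap is that this arithmetic inequality is never proved. Your proposed method --- partial fractions into two harmonic-type sums and a Hermite--Hadamard/logarithmic bound --- collapses to the estimate $k\ln(1+\tfrac1k)<1$ only after replacing the summation range by the real threshold $\frac{(k+1)^2}{k^2}(n+2)$, and, as you yourself observe, this overshoots: the quantity being bounded tends to $1$ in the extremal regime $n+2=k^2$, so the leading-order logarithmic estimate cannot close the argument. You correctly diagnose that one must exploit the integrality of $\ell$ and the parity of $n+k$, but you stop there, explicitly deferring ``the real work of the proof.'' A proof attempt that ends by naming the unproved step is not a proof. For comparison, the paper closes exactly this gap by an elementary device that avoids logarithms altogether: pairing the terms $i$ and $\ell+1-i$ (the pair sums $\frac{k}{n+2i}+\frac{k}{n+2\ell+2-2i}$ decrease in $i$ for $i\le\ell/2$), so that it suffices to prove $\frac{\ell}{n+2}+\frac{\ell}{n+2\ell}\le\frac2k$; this is then checked at the extremal admissible parameters in the cases $\ell\le k$, $\ell=k+1$, $\ell\ge k+2$ (with $k=1$ handled separately), and it is precisely the discrete steps --- $n+k$ even forces $n+2\ge k^2+2$ when $\ell=k+1$, and the constraint $2\ell-1<\frac{2k+1}{k^2}(n+2)$ gives $n+2\ge k^2+(\ell-k-1)(k-\tfrac12)+\tfrac12$ when $\ell\ge k+2$ --- that supply the slack your continuous estimate loses.
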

\begin{proof}
For $\ell=0$ we remark that
$P_{n+1}(k-1)-P_n(k-2)=\P\{S_n=k\}>0$.
For $\ell\ge1$ it is sufficient to show
$P_{n+1}(k-1)-P_n(k-2)>P_{n+1}(k-1)-P_{n+1+2\ell}(k-1)$.
Since
\begin{eqnarray*}
P_{n+1}(k-1)-P_n(k-2)&=&\P\{S_n=k\}\quad\hbox{and}
\\
P_{n+1}(k-1)-P_{n+1+2\ell}(k-1)&=&\sum_{i=1}^\ell\frac{k}{n+2i}\P\{S_{n+2i}=k\},
\end{eqnarray*}
this inequality will follow from the claim
$\P\{S_n=k\}>\sum_{i=1}^\ell\frac{k}{n+2i}\P\{S_{n+2i}=k\}$.
\\
If $\ell=1$, then $\P\{S_n=k\}\ge\P\{S_{n+2}=k\}>0$
and $1>\frac k{k+2}\ge \frac k{n+2}$.
If $\ell>1$ or $n+2>k^2$, then  $\P\{S_n=k\}>\P\{S_{n+2\ell}=k\}>0$, so that
it is sufficient to show that $\sum_{i=1}^\ell\frac{k}{n+2i}\le1$.
Theorem \ref{mainthm} now follows from Lemma \ref{main lemma} in the Appendix.
\end{proof}
\begin{cor}\label{generalcor}
Let $n_k$, $k=1,2,3,\ldots$, be an increasing sequence of integers such that $n_1\ge0$, $n_k+k$ is odd,
$n_k+1\ge k^2$ and $n_{k+1}-1<\frac{(k+1)^2}{k^2}(n_k+1)$.
Then for $n_k\le m<n_{k+1}-1$ we have
$$\P\{S_{n_2-1}=0\}=P_{n_2-1}(0)<\cdots<P_{n_k-1}(k-2)<P_{n_{k+1}-1}(k-1)<P_m(k)
,$$
which for $k=1$ reduces to
$$
\P\{S_{n_2-1}=0\}=P_{n_2-1}(0)<P_m(1).$$
\end{cor}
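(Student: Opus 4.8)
The plan is to read the displayed chain as a succession of strict inequalities of two distinct types and to dispatch each type with a result already available. Write the chain as
$$P_{n_2-1}(0)<P_{n_3-1}(1)<\cdots<P_{n_k-1}(k-2)<P_{n_{k+1}-1}(k-1)<P_m(k).$$
The opening equality $\P\{S_{n_2-1}=0\}=P_{n_2-1}(0)$ is immediate from $P_n(0)=\P\{|S_n|\le0\}=\P\{S_n=0\}$, and since $n_2+2$ is odd the index $n_2-1$ is even, so this probability is positive. The interior ``diagonal'' steps $P_{n_j-1}(j-2)<P_{n_{j+1}-1}(j-1)$ for $j=2,\dots,k$ I would obtain from Theorem \ref{mainthm}, while the last step $P_{n_{k+1}-1}(k-1)<P_m(k)$ I would obtain from the Corollary stated just before Theorem \ref{mainthm}. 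For $k=1$ the diagonal part is empty and only the last step survives, which is exactly the asserted reduction.

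For a single diagonal step I would apply Theorem \ref{mainthm} with its $k$ set to $j$, its $n$ set to $n_j-1$, and $\ell=(n_{j+1}-n_j-1)/2$, so that $n+1+2\ell=n_{j+1}-1$. All hypotheses then translate directly into the standing assumptions on $(n_k)$: the parity condition ``$n+k$ even'' becomes $(n_j-1)+j=n_j+j-1$ even, which holds because $n_j+j$ is odd; the bound $n+2\ge k^2$ becomes $n_j+1\ge j^2$; and $n+1+2\ell<\frac{(j+1)^2}{j^2}(n+2)$ becomes exactly $n_{j+1}-1<\frac{(j+1)^2}{j^2}(n_j+1)$. The requirement $n\ge k$, i.e. $n_j-1\ge j$, follows from $n_j+1\ge j^2$ once $j\ge2$, since then $j^2-2\ge j$. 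One further check is that $\ell$ is a nonnegative integer: nonnegativity is clear from $n_{j+1}>n_j$, and integrality holds because $n_{j+1}-n_j$ is odd, forced by $n_j+j$ and $n_{j+1}+(j+1)$ both being odd. With these verifications Theorem \ref{mainthm} yields $P_{n_j-1}(j-2)<P_{n_{j+1}-1}(j-1)$ for each $j=2,\dots,k$, and concatenating them gives the whole diagonal chain.

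For the terminal step I would invoke the Corollary preceding Theorem \ref{mainthm} with its index $k$ replaced by $k-1$ and with $n=n_{k+1}-1$. That Corollary requires $n+(k-1)$ even and $n>k-1$: the first holds because $n_{k+1}+k$ is even (as $n_{k+1}+(k+1)$ is odd), and the second, $n_{k+1}-1>k-1$, follows from $n_{k+1}+1\ge(k+1)^2$. Its conclusion reads $P_{n_{k+1}-1}(k-1)=P_{n_{k+1}-1}(k)<P_m(k)$ for every $m<n_{k+1}-1$, in particular for every $m$ with $n_k\le m<n_{k+1}-1$, which is precisely the last link. The same application with index $0$ and $n=n_2-1$ gives $P_{n_2-1}(0)<P_m(1)$ for $n_1\le m<n_2-1$, establishing the reduced $k=1$ statement.

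The work here is essentially bookkeeping: the analytic content has already been carried out in Theorem \ref{mainthm} and in the preceding Corollary, so the only thing that can go wrong is a mismatch of indices or parities. The step I would watch most carefully is the confirmation that the two size conditions $n_j+1\ge j^2$ and $n_{j+1}-1<\frac{(j+1)^2}{j^2}(n_j+1)$ really coincide with the theorem's hypotheses after the substitution $n=n_j-1$, $\ell=(n_{j+1}-n_j-1)/2$, together with the check that this $\ell$ is a genuine nonnegative integer; once these are pinned down the chain assembles without further analysis.
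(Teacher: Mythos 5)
Your proof is correct and follows essentially the same route as the paper: the diagonal links $P_{n_j-1}(j-2)<P_{n_{j+1}-1}(j-1)$ come from Theorem \ref{mainthm} with exactly the paper's substitution $n=n_j-1$, $\ell=(n_{j+1}-n_j-1)/2$, whose hypotheses you verify correctly. You are in fact more explicit than the paper's one-line proof, which leaves implicit both the terminal link $P_{n_{k+1}-1}(k-1)<P_m(k)$ (your appeal, with shifted index, to the corollary preceding Theorem \ref{mainthm}) and the $k=1$ case (which the paper only special-cases when $n_1=0$).
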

\begin{proof}
For $k\ge2$ or $n_1\ge2$, apply Theorem \ref{mainthm} with $n=n_k-1$ and $\ell=(n_{k+1}-n_k-1)/2$.
In case $k=1$ and $n_1=0$ we have $n_3=3$ and the claim in the corollary is trivial.
\end{proof}

\section{The original context}\label{original context}
Let $\xi>0$ and consider the event $\{|S_n|\le\xi\sqrt n\}$.
Let $k$ be the integer such that $n+k$ is even and $k\le\xi\sqrt n<k+2$.
Then $\{|S_n|\le\xi\sqrt n\}=\{|S_n|\le k\}$.
Notice that such $k$ satisfies the inequalities
$$\frac{n+k}2\le\frac{n+\xi\sqrt n}2<\frac{n+k+2}2=\frac{n+k}2+1$$
so that 
$\frac{n+k}2=\left\lfloor\frac{n+\xi\sqrt n}2\right\rfloor$ and hence
$$k=\kappa(n):=2\left\lfloor\frac{n+\xi\sqrt n}2\right\rfloor-n.$$
It follows immediately that $\kappa(n+2)\ge\kappa(n)$, $\kappa(0)=0$.
Moreover
\begin{eqnarray*}
\kappa(n+1)-\kappa(n)
&=&
2\left\lfloor\frac{n+1+\xi\sqrt {n+1}}2\right\rfloor-n-1-2\left\lfloor\frac{n+\xi\sqrt n}2\right\rfloor+n
\\&=&
2\left\lfloor\frac{n+1+\xi\sqrt {n+1}}2\right\rfloor-2\left\lfloor\frac{n+\xi\sqrt n}2\right\rfloor-1,
\end{eqnarray*}
so that $\kappa(n+1)-\kappa(n)$ is odd and greater than
or equal to $-1$.
\\
It is interesting to notice the following fact:
If $a$ and $b$ are nonnegative integers we have
$
\xi\sqrt{a}<\kappa(a)+2\hbox{ and } \kappa(b)\le\xi\sqrt{b},
$
so that 
\begin{equation}\label{distance}
a<\frac{(\kappa(a)+2)^2}{\kappa(b)^2}b.
\end{equation}
From the inequality $\lfloor a \rfloor -\lfloor b \rfloor <a-b+1$ one concludes
\begin{eqnarray*}
\kappa(n+1)-\kappa(n)
&<&
n+1+\xi\sqrt {n+1}-n-\xi\sqrt n+1
\\&=&
2+\xi\sqrt {n+1}-\xi\sqrt n
=2+\frac\xi{\sqrt {n+1}+\sqrt n}
.
\end{eqnarray*}
It follows that for $\xi\le1$, $\kappa(n+1)-\kappa(n)\le1$, since then it is an odd number strictly less than 3.
As a matter of fact, already for $\xi<2\sqrt2$ we have $\kappa(n+1)-\kappa(n)\le1$.
In the sequel assume that $\xi\le1$. Then we have the basic properties
$$\kappa(n+1)-\kappa(n)=\pm1,\quad \kappa(0)=0,\quad \kappa(n)\le\xi\sqrt{n},\quad \kappa(n+2)\ge\kappa(n).$$
\\
For $k\ge1$, define 
\begin{equation}\label{nk}
n_k:=\min\{n\mid \kappa(n+1)\ge k\}=2\left\lceil\frac{\frac{k^2}{\xi^2}+k}2\right\rceil-k-1.
\end{equation}
It is clear that $n_k$ is strictly increasing in $k$.
Moreover $\kappa(n_k+1)=k$ and $\kappa(n_k)=k-1$ and
$\kappa(n_k-1)=k-2$ (if $k\ge2$ or if $k=1$ and $n_1\ge1$).
Also $n_k+k$ is odd.
In case $\xi=1$ it is easy to see that $n_k=k^2-1$.
Since $n_k$ is decreasing in $\xi$, it follows for $\xi\le1$ that $n_k\ge k^2-1$.
\\
Notice that for $m\le n_{k+1}$ we have $\kappa(m)<k+1$, so that $\kappa(m)\le k$.
On the other hand, if $\kappa(m)\le k-2$, it follows for all $n\le m$ that $\kappa(n)\le k-1$, so that $m< n_k+1$
and since $\kappa(n_k)=k-1$ it follows that $m<n_k$. 
We conclude that for
$n_k\le m\le n_{k+1}$ we have $k-1\le\kappa(m)\le k$, so that
\begin{equation}\label{Pmk}
P_m(k)=\P\{|S_m|\le \xi\sqrt{m}\}.
\end{equation}
\\
Since $k\le\xi\sqrt{n_k+1}$ and $0<\xi\le1$ we have $k^2\le n_k+1$.
\\
From Inequality (\ref{distance}) we obtain
$$n_{k+1}-1<\frac{(k+1)^2}{k^2}(n_k+1).$$
Provided that $n_k-1\ge k$, Theorem \ref{mainthm} leads to the inequality $P_{n_k-1}(k-2)<P_{n_{k+1}-1}(k-1)$.
Notice that $n_k\ge2$ if $k\ge2$ or if $k=1$ and $\xi<1$.
The main result, Theorem \ref{the thm}, in fact follows 
from Corollary \ref{generalcor}.
More specifically,
\begin{cor}\label{maincor}
Let $\xi\le1$ and $n_k$, $k=1,2,3,\ldots$ be defined as (\ref{nk}).
Then, for $k\ge2$ and all $m$ satisfying  $n_{k-1}\le m<n_k$, we have
$$\P\{S_{n_2-1}=0\}=P_{n_2-1}(0)\le P_{n_k-1}(k-2)<P_m(k-1)=\P\{|S_m|\le\xi\sqrt{m}\}.$$
In particular, for $m\ge n_1$ we have
$\P\{S_{n_2-1}=0\}\le\P\{|S_m|\le\xi\sqrt m\}$, with equality only for $m=n_2-1$.
\end{cor}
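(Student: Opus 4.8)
The plan is to recognize Corollary \ref{maincor} as a specialization of the abstract Corollary \ref{generalcor} to the concrete sequence $(n_k)$ of (\ref{nk}), followed by a re-indexing and a short parity bookkeeping at the right endpoint of each block. First I would check that this particular $(n_k)$ satisfies every hypothesis of Corollary \ref{generalcor}. Monotonicity and $n_1\ge0$ are immediate, and the three arithmetic conditions — $n_k+k$ odd, $n_k+1\ge k^2$, and $n_{k+1}-1<\frac{(k+1)^2}{k^2}(n_k+1)$ — are precisely the facts assembled in Section \ref{original context}, the last of them coming from Inequality (\ref{distance}). I would also record the side condition $n_k-1\ge k$ that Corollary \ref{generalcor} needs internally in order to invoke Theorem \ref{mainthm}: since $\xi\le1$ forces $n_k\ge k^2-1$, one has $n_k-1\ge k^2-2\ge k$ for $k\ge2$, leaving only the degenerate case $k=1$, $\xi=1$ (where $n_1=0$) to be noted separately, exactly as in the proof of Corollary \ref{generalcor}.

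With the hypotheses in place, applying Corollary \ref{generalcor} gives, for every $j\ge1$ and every $m$ with $n_j\le m<n_{j+1}-1$, the chain
$$P_{n_2-1}(0)<\cdots<P_{n_j-1}(j-2)<P_{n_{j+1}-1}(j-1)<P_m(j).$$
Setting $j=k-1$ (so $k\ge2$) turns this into the main display of Corollary \ref{maincor}, valid a priori on the range $n_{k-1}\le m<n_k-1$, with the weak inequality $P_{n_2-1}(0)\le P_{n_k-1}(k-2)$ absorbing the trivial case $k=2$. It then remains to identify $P_m(k-1)$ with the target probability $\P\{|S_m|\le\xi\sqrt m\}$. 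Repeating, shifted by one index, the reasoning of Section \ref{original context} that produced (\ref{Pmk}) shows $k-2\le\kappa(m)\le k-1$ throughout $n_{k-1}\le m\le n_k$; combined with the elementary rule that $P_m(k-1)=P_m(k-2)$ whenever $m+(k-1)$ is odd, this gives $P_m(k-1)=P_m(\kappa(m))=\P\{|S_m|\le\xi\sqrt m\}$ on the whole block.

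The one place requiring genuine care — and the step I expect to be the main obstacle — is the right endpoint $m=n_k-1$, where the strict inequality of the display degenerates. There $\kappa(n_k-1)=k-2$, and since $n_k+k$ is odd the quantity $m+(k-1)=n_k+k-2$ is odd, so $P_{n_k-1}(k-1)=P_{n_k-1}(k-2)$ and the claimed strict inequality becomes an equality at this single point. To deduce the concluding assertion I would let $m\ge n_1$ be arbitrary and locate the unique $k\ge2$ with $n_{k-1}\le m<n_k$. If $n_{k-1}\le m<n_k-1$, the interior strict inequality gives $\P\{|S_m|\le\xi\sqrt m\}=P_m(k-1)>P_{n_k-1}(k-2)\ge P_{n_2-1}(0)$; if $m=n_k-1$, the endpoint computation gives $\P\{|S_m|\le\xi\sqrt m\}=P_{n_k-1}(k-2)\ge P_{n_2-1}(0)$, with equality in this last step precisely when $k=2$, that is when $m=n_2-1$. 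Hence $P_{n_2-1}(0)=\P\{S_{n_2-1}=0\}$ is a lower bound for $\P\{|S_m|\le\xi\sqrt m\}$ over all $m\ge n_1$, attained only at $m=n_2-1$, which is the \emph{In particular} claim. Everything beyond the endpoint parity check is a mechanical re-indexing of results already established.
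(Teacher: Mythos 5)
Your proposal is correct and follows essentially the same route as the paper: the paper gives no separate proof of Corollary \ref{maincor}, presenting it as the specialization of Corollary \ref{generalcor} to the sequence (\ref{nk}), with the needed hypotheses ($n_k+k$ odd, $n_k+1\ge k^2$, the growth bound from Inequality (\ref{distance}), and $n_k-1\ge k$ for $k\ge2$) assembled in Section \ref{original context}, exactly as you do. Your endpoint analysis is in fact a correction the paper glosses over: at $m=n_k-1$ one has $\kappa(n_k-1)=k-2$ and, by parity ($n_k+k$ odd), $P_{n_k-1}(k-1)=P_{n_k-1}(k-2)$, so the displayed strict inequality degenerates to an equality at that single point --- e.g.\ for $\xi=1$, $k=2$, $m=2$ one gets $P_2(1)=P_2(0)=\frac{1}{2}$ --- and it is precisely your case split at the right endpoint that correctly yields the ``In particular'' claim with equality exactly at $m=n_2-1$.
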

\begin{proof}[Proof of Theorem \ref{the thm}]
Claim a) has been dealt with in (\ref{Pmk}).
Claims b), c) and e) follow directly from the above Corollary \ref{maincor}.
Finally, Claim d) follows from the Central Limit Theorem.
\end{proof}
It is the condition $\xi\le1$ that implies that $n_k+1\ge k^2$, needed in Corollary \ref{generalcor}.
For $\xi>1$ it is no longer true that $P_{n_{k+1}-1}(k-1)>P_{n_k-1}(k-2)$ as can be seen from the following examples.
For $\xi=\sqrt2$, we have $n_4=7,n_5=12$ and $P_{n_5-1}(3)=\frac{99}{128}<\frac{100}{128}=P_{n_4-1}(2)$.
For $\xi=1.1$ and $k=22$:
$n_{22}=399=20^2-1;n_{23}=438$ and $P_{{n_{23}}-1}(21) <0.70745< P_{n_{22}-1}(20)$.
For $\xi=1.01$ and $k=202$, $n_k=39999=200^2-1$, $n_{k+1}=40398$ and $P_{{n_{203}}-1}(201) <0.6851152< P_{n_{202}-1}(200)$.
\\
\\
Concerning Corollary \ref{Cor2} we note the following.
It is straightforward to see that
$$ n_2=\begin{cases} 3,&\;\;\text{for}\;\; \xi =1,\\
5,&\;\;\text{for}\;\; \xi \in [\sqrt{2/3},1),\\
7,&\;\;\text{for}\;\; \xi \in [\sqrt{1/2},\sqrt{2/3}),\\
\end{cases} $$
so that
$$\P\{-1\leq S_{n_2-1}\leq 1\}=
\begin{cases} 1/2,&\;\;\text{for}\;\; \xi =1,\\
3/8,&\;\;\text{for}\;\; \xi \in [\sqrt{2/3},1),\\
5/16,&\;\;\text{for}\;\; \xi \in [\sqrt{1/2},\sqrt{2/3}).\\
\end{cases} $$
More generally, from definition (\ref{nk}) we have
$n_2=2\left\lceil\frac{\frac4{\xi^2}+2}2\right\rceil-2-1=2\left\lceil\frac2{\xi^2}\right\rceil-1$,
which is equivalent to
$$
\frac2{\sqrt{n_2+1}}\le\xi<\frac2{\sqrt{n_2-1}}
$$
and to
$$
\frac{n_2+3}8<\frac{\frac1{\xi^2}+1}2\le\frac{n_2+5}8.
$$
Since $n_2$ is odd, the open interval 
$(\frac{n_2+3}8,\frac{n_2+5}8)$ does not contain an integer.
Thus, for such $\xi$,
$$n_1=2\left\lceil\frac{\frac1{\xi^2}+1}2\right\rceil-1-1=
2\left\lceil\frac{n_2+5}8\right\rceil-2=2\left\lceil\frac{n_2-3}8\right\rceil
$$
and for all $n\geq n_1$,
 we have from Theorem \ref{the thm}
 $$P_n\geq P_{n_2-1}=\P\{S_{n_2-1}=0\}=\binom{n_2-1}{(n_2-1)/2}2^{-(n_2-1)}.$$

\section{Examples}\label{Examples}

In case $\xi=\sqrt{1/2},$  we obtain
 for   $k\in\{1,2,...\}$
 $$n_k=2\left\lceil\frac{\frac{k^2}{\xi^2}+k}2\right\rceil-k-1=
2\left\lceil\frac{2k^2+k}2\right\rceil-k-1= \begin{cases}2k^2-1,\;\; \text{for k = even} ,\\
2k^2,\;\;\;\;\;\;\;\; \text{for k = odd}.
\end{cases}$$
 In this case  $n_1=2,n_2=7,n_3=18,n_4=31,$ so that $C_1=[2,6],C_2=[7,17],C_3=[18,30]$  and the minimal value in $C_1$ is  
 $$P_{n_2-1}=P_{6}=P\{-\xi\sqrt{n_2-1}\leq S_{n_2-1}\leq \xi\sqrt{n_2-1}\}= 
P\{-1\leq S_{6}\leq 1\}=P\{S_{6}=0\}= $$
$$=P\{B_{6}=3\}=\frac{5}{16}.$$
Here the $B_n$ denote the binomial random variables as in the Introduction. Also,
$$
P_{n_3-1}=P_{17}=\P\{-2\le S_{17}\le2\}=2\P\{S_{17}=1\}=2\P\{B_{17}=9\}=\frac{12155}{32768}\ge
\frac{10240}{32768}=\frac5{16}.
$$

\noindent In case $\xi=\sqrt{2/3},$ hence we obtain
 for   $k\in\{1,2,...\}$

$$n_k=2\left\lceil\frac{\frac{3k^2}{2}+k}2\right\rceil-k-1
=2\left\lceil\frac{3k^2+2k}4\right\rceil-k-1=
\begin{cases}\frac32k^2-1,\;\; \text{for k = even} ,\\
\frac32k^2+\frac12,\;\; \text{for k = odd}.
\end{cases}.   $$
Hence, $n_1=2 ,n_2=5, n_3=14, n_4=23, n_5=38, n_6=53$  with blocks $C_1=[2,4],C_2=[5,13], C_3=[14,22], C_4=[23,37],C_4=[38,52].$
The minimal value in $C_1$ is obtained for 
$$P_{n_2-1}=P\{-\xi\sqrt{n_2-1}\leq S_{n_2-1}\leq \xi\sqrt{n_2-1}\}= 
P\{-1\leq S_{4}\leq 1\}=P\{S_{4}=0\}=$$
$$=P\{B_{4}=2\}=\frac{3}{8}.$$
Also,
$$
P_{n_3-1}=\P\{-2\le S_{13}\le2\}=2\P\{S_{13}=1\}=2\P\{B_{13}=7\}=\frac{429}{1024}\ge
\frac{384}{1024}=\frac3{8}.
$$

\noindent In case $\xi=1$  we obtain for  $k\in\{1,2,...\}$
 $$n_k=2\left\lceil\frac{\frac{k^2}{\xi^2}+k}2\right\rceil-k-1=k^2-1.$$ 
We obtain for integers $k\geq 2,$  $C_k=\{k^2-1,k^2,...,(k+1)^2-2\},$ with length $m_k=2k+1.$
Now $n_1=0,n_2=3,n_3=8,n_4=15,$ so that $C_1=[0,2],C_2=[3,7],C_3=[8,14].$
  The minimal value in $C_1$ is obtained for  
  $$P_{n_2-1}=P\{-\xi\sqrt{n_2-1}\leq S_{n_2-1}\leq \xi\sqrt{n_2-1}\}=P\{-1\leq S_{2}\leq 1\}=P\{S_{2}=0\}=$$
$$=P\{B_{2}=1\}=\frac{1}{2}.$$
The minimal value in $C_2$ is obtained for $n= n_3-1=7$ and equals
 $$P_{n_3-1}=P_7=\P\{-2\leq S_7\leq 2\}=2\P\{S_7=1\}=2\P\{B_7=4\}
 =\frac{35}{64}\ge\frac{32}{64}=\frac12.$$

\section*{Appendix}
In this section we state and prove the lemma needed in the proof of Theorem \ref{mainthm}.
\begin{lem}\label{main lemma}
Suppose $k\ge1$ and $n+k$ even.
If $n+2\ge k^2$ and $\ell\ge0$, such that $n+1+2\ell<\frac{(k+1)^2}{k^2}(n+2)$, then
$\sum_{i=1}^\ell\frac{k}{n+2i}\le1$.
\end{lem}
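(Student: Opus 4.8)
I would prove the equivalent inequality $\sum_{i=1}^{\ell}\frac{k}{n+2i}\le 1$ by a telescoping argument, handling the ranges $\ell\le k$ and $\ell>k$ by two parallel elementary bounds, both exploiting that every denominator $M=n+2i$ satisfies $M\ge n+2\ge k^2$. For $\ell\le k$ the plan is to use the square-root bound: for $M\ge k^2$,
\[
\frac{k}{M}\le\frac{2}{\sqrt{M+1}+\sqrt{M-1}}=\sqrt{M+1}-\sqrt{M-1},
\]
where the first inequality reads $k(\sqrt{M+1}+\sqrt{M-1})\le 2M$ and follows from concavity of the square root ($\sqrt{M+1}+\sqrt{M-1}\le 2\sqrt M$) together with $\sqrt M\le M/k$, i.e. $M\ge k^2$. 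Summing over $i=1,\dots,\ell$, the right-hand side telescopes to $\sqrt{n+2\ell+1}-\sqrt{n+1}$, which is $\le 1$ exactly when $\sqrt{n+2\ell+1}\le 1+\sqrt{n+1}$, i.e. (squaring) when $n\ge \ell^2-\ell$. Since $n\ge k^2-2$ and $\ell^2-\ell\le k^2-k\le k^2-2$ for $\ell\le k$ and $k\ge 2$, this range is settled.

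For $\ell>k$ the square-root increment is too large when $n$ is big, so I would instead use the logarithmic bound coming from $\frac1M\le\operatorname{arctanh}\frac1M=\frac12\ln\frac{M+1}{M-1}$ (valid for $M>1$). Now the summands telescope to $\sum_{i=1}^{\ell}\frac{k}{n+2i}\le\frac k2\ln\frac{n+2\ell+1}{n+1}$, so it suffices to prove $\frac{n+2\ell+1}{n+1}\le e^{2/k}$. The hypothesis $n+1+2\ell<\tfrac{(k+1)^2}{k^2}(n+2)$ gives $\frac{n+2\ell+1}{n+1}<\tfrac{(k+1)^2}{k^2}\cdot\frac{n+2}{n+1}$, and since $\bigl(1+\tfrac1k\bigr)^2<e^{2/k}$ this closes the estimate for every $n$ at which the spurious factor $\frac{n+2}{n+1}$ is close enough to $1$, that is, for all $n$ beyond a threshold only marginally above $k^2$.

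The main obstacle is the narrow transition window $k^2\le n+2\lesssim k^2+2k$, where $\ell$ exceeds $k$ only slightly and the relaxed logarithmic estimate is not sharp (at $n+2=k^2$ it degenerates to $\frac{k+1}{k-1}>e^{2/k}$). Here I would replace the relaxed constraint by its integer form. For fixed $\ell$ the ratio $\frac{n+2\ell+1}{n+1}$ is decreasing in $n$, so only the least admissible $n$ matters, and integrality is what pins it down: since $n\equiv k\pmod 2$ and $2\ell-1$ is odd, both $k^2(2\ell-1)$ and $(2k+1)(n+2)$ are $\equiv k\pmod2$, so the strict inequality $k^2(2\ell-1)<(2k+1)(n+2)$ sharpens to $k^2(2\ell-1)\le(2k+1)(n+2)-2$, while the parity of $n$ pushes the least admissible $n+2$ up from $k^2$ by a full step. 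The extremal configuration is $\ell=k+1$ with $n+2=k^2+2$, where the goal becomes $\frac{k^2+2k+3}{k^2+1}\le e^{2/k}$; this holds because $\frac{2k+2}{k^2+1}<\frac{2k+2}{k^2}=\frac2k+\frac2{k^2}\le e^{2/k}-1$. The remaining near-boundary cases ($\ell=k+1,k+2,\dots$ at their least $n$) each reduce to a single explicit rational inequality, settled by a truncated Taylor lower bound for $e^{2/k}$, and the trivial case $\ell=0$ together with the generous case $k=1$ (where $e^{2/k}=e^2$) is immediate.
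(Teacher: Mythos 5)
Your approach is correct and genuinely different from the paper's. The paper never leaves rational arithmetic: by convexity of $x\mapsto 1/x$ it pairs the $i$-th and $(\ell+1-i)$-th terms, so that $\sum_{i=1}^\ell\frac k{n+2i}\le\frac\ell2\left(\frac k{n+2}+\frac k{n+2\ell}\right)$, reducing the lemma to the two-term inequality $\frac\ell{n+2}+\frac\ell{n+2\ell}\le\frac2k$; this is then checked by monotonicity in $n$ and $\ell$ and substitution of the extremal admissible integers in the cases $\ell\le k$, $\ell=k+1$, $\ell\ge k+2$ and $k=1$ (the same parity sharpening you invoke appears there), each case ending in an explicitly nonnegative polynomial. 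You instead majorize term by term with telescoping bounds, $\frac kM\le\sqrt{M+1}-\sqrt{M-1}$ for $M\ge k^2$ when $\ell\le k$, and $\frac1M\le\frac12\ln\frac{M+1}{M-1}$ when $\ell>k$; this disposes of the range $\ell\le k$ and of all large $n$ very cleanly, at the price of comparing with the transcendental quantity $e^{2/k}$ by Taylor truncation, and, exactly as in the paper, the real work still sits in the narrow window near $n+2=k^2$, where you need the same integrality/parity sharpening. Your worst case $\ell=k+1$, $n=k^2$ is handled correctly. The one step you must make explicit is that your ``remaining near-boundary cases'' are a priori infinitely many (every $\ell\ge k+2$ at its minimal $n$); they do close uniformly: the gap-2 sharpening $k^2(2\ell-1)\le(2k+1)(n+2)-2$ reduces the required bound $\frac{n+2\ell+1}{n+1}\le e^{2/k}$ to
$$\frac{2\ell(2k+1)}{k^2(2\ell-1)-2k+1}\le e^{2/k}-1,$$
whose left-hand side is decreasing in $\ell$, so only $\ell=k+2$ matters, and there the truncation $e^{2/k}-1\ge\frac2k+\frac2{k^2}+\frac4{3k^3}$ turns, after cross-multiplication, into $0\le 2k^3+6k^2-2k+4$, valid for all $k\ge1$. (Also note the degenerate point $k=1$, $n=-1$ permitted by the hypotheses, where the logarithmic bound cannot be applied since $n+1=0$; there the constraint forces $\ell\le1$ and the sum equals $1$.) With these two additions your argument is complete; the trade-off is that the paper settles the critical window with one-line polynomial identities, while your route buys a more conceptual, computation-free handling of everything away from that window.
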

\begin{proof}
The goal is to prove the inequality
$$
\sum_{i=1}^\ell\frac{k}{n+2i}\le1.
$$
It is easy to see that $k/(n+2i)+k/(n+2\ell+2-2i)$ is decreasing in $i$ for $i\le\ell/2$.
Therefore it is sufficient to prove $k/(n+2)+k/(n+2\ell)<2/\ell$, or equivalently
\begin{equation}\label{simple ineq}
\frac{\ell}{n+2}+\frac{\ell}{n+2\ell}\le\frac2k.
\end{equation}
Since the left hand is increasing in $\ell$, it is sufficient for given $n$ to consider the maximally allowed $\ell$.
In the same way, given $\ell$ it is sufficient to prove the inequality for the minimally allowed $n$.
\\
The condition $n+1+2\ell<\frac{(k+1)^2}{k^2}(n+2)$ is equivalent to $2\ell-1<\frac{2k+1}{k^2}(n+2)$.
Thus for any $n$ such that $n+2\ge k^2$, $\ell=k$ is an allowed value for $\ell$. The corresponding minimal value of
$n$ is $n=k^2-2$.
It follows that for $\ell\le k$ Inequality (\ref{simple ineq}) holds:
$$\frac2k-\frac \ell{n+2}-\frac \ell{n+2\ell}\ge\frac2k-\frac k{n+2}-\frac k{n+2k}\ge\frac2k-\frac k{k^2}-\frac k{k^2}=0.$$
Next consider the case $\ell\ge k+1$.
Then the condition $2\ell-1<\frac{2k+1}{k^2}(n+2)$ leads to
\begin{equation}
\label{ineq n+2}
n+2>\frac{2\ell-1}{2k+1}k^2=k^2+(\ell-k-1)(k-\frac12)+\frac{\ell-k-1}{2(2k+1)}
\ge k^2+(\ell-k-1)(k-\frac12).
\end{equation}
In case $\ell=k+1$ it means that $n+2> k^2$, and because $n+k$ is even, $n+2\ge k^2+2$.
Substituting $\ell=k+1$ and $n=k^2$ we get Inequality (\ref{simple ineq}) for $\ell=k+1$:
$$\frac2k-\frac\ell{n+2}-\frac\ell{n+2\ell}
=
\frac{2(k^2+2k+4)}{k(n+2)(n+2\ell)}\ge0.
$$
For the case $\ell\ge k+2$ we conclude from Inequality (\ref{ineq n+2}) that $n+2\ge k^2+(\ell-k-1)(k-\frac12)+\frac12$.
Substituting $\ell=k+2+j$ and $n=k^2+(\ell-k-1)(k-\frac12)-\frac32$ we get
$$\frac2k-\frac\ell{n+2}-\frac\ell{n+2\ell}
=
\frac{2j(k^2-2)+j^2(2k-3)}{k(n+2)(n+2\ell)}.
$$
Since the right hand side is nonnegative for $j\ge0$ and $k\ge2$ we established Inequality (\ref{simple ineq})
for $k\ge2$ and $\ell\ge k+2$.
\\
If $k=1$, $n$ odd, then from $n+1+2\ell<\frac{(k+1)^2}{k^2}(n+2)$ it follows that
$2\ell-1<3(n+2)$, which implies  $2\ell-1\le 3(n+2)-2$, so that the maximal $\ell$ is $\ell=(3n+5)/2$.
Again 
$$\frac2k-\frac{\ell}{n+2}-\frac{\ell}{n+2\ell}=\frac{(n+1)(n+5)}{2(n+2)(n+2\ell)}\ge0.$$

\end{proof}

\end{document}